\newcommand{\mB}{\ensuremath{\mathcal{B}}}
\newcommand{\mH}{\ensuremath{\mathcal{H}}}
\newcommand{\mN}{\ensuremath{\mathcal{N}}}
\newcommand{\mP}{\ensuremath{\mathcal{P}}}
\newcommand{\hRb}{\ensuremath{\hat{R}_b}}
\newcommand{\bRb}{\ensuremath{\bar{R}_b}}
\newcommand{\Nmax}{\ensuremath{N_{\max}}}
\newcommand{\zph}{\ensuremath{z^{p,h}}}
\newcommand{\yphb}{\ensuremath{y^{p,h}_b}}
\newcommand{\yphbd}{\ensuremath{y^{p,h}_{b''}}}
\newcommand{\yphbs}{\ensuremath{y^{p,h}_{b^*}}}
\newcommand{\hzph}{\ensuremath{\hat{z}^{p,h}}}
\newcommand{\hyphb}{\ensuremath{\hat{y}^{p,h}_b}}
\newcommand{\hzpM}{\ensuremath{\hat{z}^{p,M}}}
\newcommand{\hypMb}{\ensuremath{\hat{y}^{p,M}_b}}
\newcommand{\zdp}{\ensuremath{\hat{z}^{p}}}
\newcommand{\Tdp}{\ensuremath{\hat{T}^p}}
\newcommand{\ydpb}{\ensuremath{\hat{y}^p_b}}
\newcommand{\xdpb}{\ensuremath{\hat{x}^p_b}}
\newcommand{\mNgz}{\ensuremath{\mN^{>0}}}
\newtheorem{model}{Formulation}
\newtheorem{proposition}{Proposition}
\title{A Rapid Algorithm for Beam Illumination Patterns and Hopping Time Plan}
\author{
Angus L. Gaudry$^1$\footnote{Contact Author}\and
Vicky Mak-Hau$^2$
\\
\affiliations
$^1$School of Information Technology, Deakin University, Waurn Ponds, Geelong, Vic 3215, Australia\\
\emails
\{agaudry, vicky\}@deakin.edu.au}
\begin{document}

\maketitle

\begin{abstract}

Beam hopping (BH) is a satellite communications technique in which sets of beams are sequentially illuminated over a defined time interval. Geographically varying the duty cycle of satellite transmission allows for reduced resource wastage as satellite capacity is matched to non-uniform user demands.  
Total feasible active beam combinations is given by $2^n$ for $n$ beams. With in service satellite systems operating in excess of 100 beams, complete enumeration of optimum illumination patterns is not tractable. Developing efficient optimization methods which minimize resource wastage is essential to realising the benefits of modern BH systems.  
We present a computationally efficient pattern generation method which uses the binary logarithm to decompose beam demands into common powers of two. Patterns are generated by beams sharing common powers with illumination times being a function of the magnitude of the power.
This method is shown to produce feasible patterns within 0.047 and 0.31 seconds for systems using 49 and 132 beams respectively and within 19.109 seconds for a 1085 beam system. 
When averaged across all testing configurations, this method reduced capacity error by 94\% compared to a conventional even data distribution. 
To facilitate algorithm comparison, two integer linear programming formulations are developed. Even though they can only solve problems of modest sizes to proven optimality, they provide valuable insights in the optimality gap for our proposed heuristic approach. 
\end{abstract}

\section{Introduction}

Contemporary satellite communication (SATCOM) problems are defined by non-uniform geographically dispersed user demands. Therefore capability trends in addressing these problems are characterised by flexibility and efficiency in order to minimize resource cost while adapting to changing user requirements. Beam hopping (BH) is one such capability which uses time domain switching to adapt supplied data rate to the requested demand per beam. This is achieved by cycling sets of illuminated beams over a short time period to create a pseudo-continuous data supply. The illumination duration of each set of beams is adjusted to meet the user demands across the complete cycle - reducing over/under supply of resources to beams.  

A satellite spot-beam is defined as a discrete area of radiated power, projected onto a surface by a satellite's antenna. A user located within the defined beam area is able to receive SATCOM data relative to the radiated power, with a beam having a non-zero radiated power being defined as ``illuminated''. The set of concurrently illuminated beams for a given time instance is defined as the beam illumination pattern (BIP) and the complete set of BIPs to be repeated over a defined time interval is the beam hopping time plan (BHTP). See Figure \ref{Dia:solution} for an example. Current in service High Throughput Satellites (HTS) operate in the order of 100 beams, however future systems such as the Viasat-3 satellite are expected to use over 1000 beams. While increased beam numbers allow for greater data throughput and flexibility, it significantly increases resource management complexity as the solution space for optimization problems associated with the satellite increases.

\begin{figure*}
  \includegraphics[width=\textwidth]{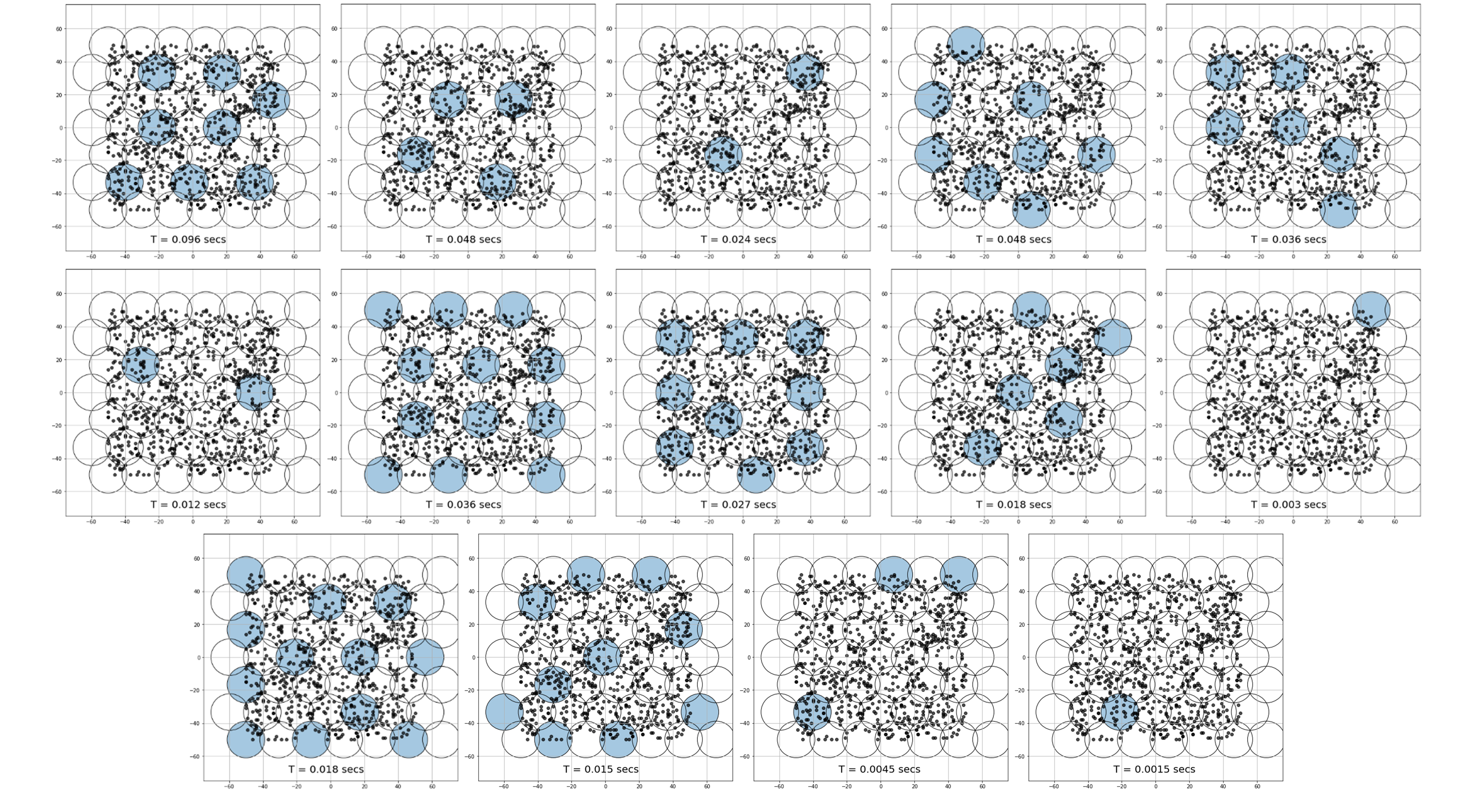}
  \caption{Example of a beam hopping cycle with 14 beam illumination patterns and their associated illumination times. Black dots are users and the circles are the beam footprints.}
  \label{Dia:solution}
\end{figure*}

\section{Defining an underlying mathematical problem}\label{sec2}

A BHTP comprises a set of BIPs each with an associated illumination duration (a.k.a. Beam Dwell Time (BDT)). Inter-beam interference must be considered when developing BIPs, In our implementation, we assume that the full bandwidth is supplied to each active beam, with a radiation pattern where interference is negligible beyond adjacent beams. If a dual polarisation frequency reuse system is considered, such as those seen in typical HTS applications, neighbouring beams could be illuminated while avoiding interference. However, this would reduce the flexibility of the beam hopping system to assign the sum of available frequencies to a single beam. 

The BH Optimization Problem (BHOP) is to find a BHTP such that the overall supplied data rate for each beam is as close to the requested data rate as possible subject to a number of equipment constraints, including the cardinality constraint (at most $\Nmax$ beams simultaneously illuminated in a time instance) and neighbouring beams interference constraints. The implementation of a BHTP is constrained by the BIP minimum illumination granularity. This will be determined by the required terminal synchronization time of the utilised waveform. The DVB-S2X standards, which are used extensively in BH literature, provide a verified waveform for use in a BH system. In this standard, the illumination time of a BIP is determined by the Super-Frame (SF) structure, which can either be fixed duration blocks or variable duration depending on the waveform chosen. 
Let: 
\begin{itemize}
    \item $N_B$ be the number of satellite spot-beams; 
    \item $\mN = \{1,\ldots,N_B\}$ be the index set of beams; 
    \item $T_H$ duration of a BH cycle; 
    \item $d$ the duration of a SF (i.e., the duration of a time slot if fixed SF length is considered); 
    \item $W$ number of (fixed length) SF in a BH cycle (hence, $T_H = d \times W$); 
    \item $\hRb$ the requested demand in Beam $b$; and 
    \item $\bRb$ the supplied data rate for Beam $b$. 
\end{itemize} 
A trivial solution can be obtained by one beam per BIP with an illumination time of 
$\hRb T_H /(\sum_{b \in \mN} \hRb)$ 
in a BH cycle with flexible-length SF, and 
$ \hRb W/(\sum_{b \in \mN} \hRb)$ 
rounded to the nearest integer for a fixed-length SF. 
However, at the transition of each BIP, time is required to account for beam switching and terminal synchronization, this is captured in the pre/post-amble frames of the SF structure and is defined as the switching time. For a fixed time interval, the effective data transmission time decreases with an increase of cumulative switching time - therefore the efficiency of a BHTP will increase with a decrease in the number of BIP's used.

In this paper, we consider an underlying combinatorial optimization problem, one that finds the BIPs and their associated {\em weights} such that for each beam, the ratio of accumulated weights for this beam over total accumulated weights from all beams is as close to 
$\hRb/(\sum_{b \in \mN} \hRb)$ 
as possible for a fair distribution of data rate, whilst minimizing the number of BIPs required. 
This optimization problem assumes each beam in a BIP achieves the same data transfer rate, which is a function of the defined satellite capacity and the number of simultaneously illuminated beams. When considering real-world integration of a BH system, the supplied data rate to a beam is a function of the assigned bandwidth and spectral efficiency - which will be determined by the modulation and coding scheme used based on the achieved carrier to noise ratio (CNR). Therefore the solution to the combinatorial optimization problem provides an objective metric for further resource planning, as the power and bandwidth required to achieve the desired data transfer rate when considering link noise and losses captured in an individual beams CNR can be determined.

\subsection{Literature review} 

The BHTP has been studied in the literature with different variations considered. Some divide the beams into clusters that allow only one beam to be illuminated in each cluster at a time, while others applied frequency re-use within each cluster for avoiding adjacent beam interference. In terms of heuristic methods, there are simple heuristic construction methods (with no real optimization), meta-heuristic approaches (with Genetic Algorithm (GA) the most commonly used), and Machine Learning (ML) methods (technically also heuristics). Even though there were Mathematical Programming (MP) formulations, as the calculation of supplied data rate requires the consideration of signal to interference and noise ratio (SINR), they are either not linear, or linear but exponentially large (e.g., a column generation-type formulation), and solved by heuristic approaches, one way or another. 


GA was implemented in, e.g., \cite{angeletti2006beam} and \cite{anzalchi2010beam}. The latter proposed to apply,  sequentially, GA, Variable Neighbour Search (VNS), and Iterated Local Search (ILS) in finding the optimal BHTP, power plan, and frequency plan. 
%
Iterative or incremental heuristic allocation has been proposed in, e.g., \cite{alberti2010system}. In \cite{alegre2012offered}, a sequential construction method is applied for time allocation and power allocation. 
In \cite{Shi2020}, a heuristic approach called the Cuckoo search is implemented. 
In \cite{lei2011multibeam}, when co-channel interference is considered, a heuristic iterative approach is implemented where in the first instance, carrier (frequency bandwidth) or time is allocated on a per-beam basis.  
Then, the power to be allocated to the selected carriers from the power constraint will be calculated. When co-channel interference is not considered, the optimization problems are formulated as convex programming problems, and solved using Lagrangean relaxation.  
In Mathematical Programming approaches,  \cite{Wang2021} developed a Mixed Integer Conic Programming model but proposed a greedy algorithm with polynomial time complexity. \cite{Wang2019} proposed a method for power and time-slot allocation, where all beams are divided into clusters whilst allowing four-colour frequency reuse within each cluster. The first optimization problem is to find the optimal number of clusters, then the second allocate power to clusters, and the third allocate time-slot to the beams. Again, Lagrangean relaxation is used.  \cite{Kyrgiazos2013} solves a binary integer programming problem to determine the BIP for each time-slot. If the BIP is different for each time-slot, then a substantial amount of time will be used in pre- and post-amble as opposed to used in data transfer.  
In ML approaches, \cite{Hu2019,hu2020dynamic,Zhang2019} proposed Deep Reinforcement Learning (DRL) approaches. An integrated Deep Learning (DL) and ILP approach is proposed in  \cite{Lei2020}, where DL is used to predict the most frequently used number of illuminated beams in each pattern, hence the ILP is only required to consider BIPs with the number of illuminated beams limited to the range identified by the DL process. 

Table \ref{T:equipos} presents a comparison of the key system parameters used to define problem instances in the reviewed literature. This highlights the difficulties in conducting direct comparisons between solution approaches as each paper used different input datasets and system assumptions. While most of the reviewed literature constrained the problem to a specific geographic location, it was noted that the reviewed papers did not present orbital parameters for the reference satellite and the impact of satellite beam elongation was not discussed. While most papers utilised a flat beam projection (assuming beam shapes remain unchanged from that at the sub-satellite point), this assumption was not justified.


\begin{table*}[!h]
\begin{centering}
\begin{tabular}{| l | c | c | c | c  |c|}
\hline
\textbf{Literature} & \multicolumn{4}{ c |}{\textbf{System parameters}} & \textbf{Method} \\ 
\cline{2-5}
&  &      & Timeslot /     & Coverage Region /  & \\
& $N_B$ & $N_{\max}$     &  BH Cycle Duration    & Projection & \\
\hline 
\cite{alberti2010system} 
& 70    & 18SP/35DP  & Flexible / Undefined  & Europe / Flat    
& ICH \\ \hline
 \cite{angeletti2006beam}
 & 100  & 35       &   Undefined / 12 slots            & Europe / Flat       
 & GA \\ \hline
 \cite{anzalchi2010beam} 
 & 70   &    Undefined       &           Undefined / 4 and 12 slots    & Europe / Flat     
 & GA - (V)NS - ILS \\ \hline
 \cite{Hu2019} 
 & 37   & 10          &     100ms / Undefined           & Undefined / Flat
 & DRL \\ \hline
 \cite{hu2020dynamic} 
 & 37   & 10           & 100ms / Undefined    
                                                             & Undefined / Spherical
 & DRL \\ \hline
 \cite{lei2020beam}  
 & 16   &    Unconstrained      & 1ms / 256 slots     
                                                         & Europe / Spherical
 & IP/LP-DL\\ \hline
 \cite{Wang2019a}  
 & 70   &   5       & Undefined / 100 slots     
                                                         & Europe / Flat
 & ICH\\ \hline
\end{tabular}
\caption{Abbreviations: 
GA, genetic algorithm; 
SP, single polarization; 
DP, dual polarization; 
ICH, iterative constructive heuristics; (V)NS, (variable) neighbourhood search; ILS, iterated local search; DL, deep learning; DRL, deep reinforcement learning; IP, integer programming; LP, linear programming. Recall that $N_B$ is the number of user beams, $N_{\max}$ is the maximum number of beams that can be illuminated simultaneously. 
} 
\label{T:equipos}
\end{centering}
\end{table*}

\subsubsection{Contributions and outline of paper}

Whilst existing state-of-the-art algorithms (for BH and other resource allocation problems, such as power allocation) use various forms of matching of supplied data rate to requested data rate as an objective function in the model formulation, we propose a very different approach. We minimize the number of patterns within a BHTP, reducing the time assigned to beam switching and terminal synchronization within a hopping cycle, increasing the effective illumination time per cycle. We then calculate the accumulated time allocation to each beam in the BHTP such that total illuminated time of each beam conforms to the requested data rate distribution to the beams. Such an approach maximizes the efficiency of a BHTP while ensuring a fair supply distribution, in that each beam will have more or less the same demand-satisfaction percentage (i.e., the same ratio of unmet and supplied data). With this approach, we have a well-defined combinatorial optimization problem, which we describe in Section \ref{Sec:IPs} where we present a Binary Integer Linear Programming (BILP) model followed by a Mixed Integer Linear Programming (MILP) model. In terms of other resources, we consider a uniform power allocation to each beam.

  Our approach does not require handling the CNR within the model, as satellite throughput is assumed to be equal to the requested demand rate, allowing for a generalized optimization approach, not specific to a satellite or geographic region.  

Both mathematical programming models can only solve problems of modest scales as they both have a very poor Linear Programming relaxation (LPR) lower bound (LB)--we prove that the LPR LB is 1 for both models. For this reason, we propose a rapid complexity log $n$ algorithm -- the Decomposition by Powers of Two (DP2) algorithm, in Section \ref{Sec:DP2}. DP2 can solve a 1085-beam problem in just 19.01 seconds. The capacity error is less than 10\% for up to 100 beams, and under 19\% for up to 200 beams.   

Due to the lack of common data sets, satellite models and objective functions, a direct comparison between studies is difficult. In order to create a benchmark problem set, we created a satellite agnostic test-bed with simulated beam footprints, user demands and computational results as displayed in Section \ref{Sec:results}.

\section{Mathematical models} 
\label{Sec:IPs} 

In this section, we present a BILP for fixed-length SF BH systems, and a MILP for both fixed-length and flexible-length SF BH systems. We then present some mathematical properties of these mathematical programming formulations. 

The two mathematical programming models will achieve the following goal. Suppose that we have 5 beams, and the requested demands are given by $\hat{R}_1 = 109, \hat{R}_2 = 120, \hat{R}_3 = 91, \hat{R}_4 = 87, \hat{R}_5 = 135$. Instead of minimizing the differences between the offered and requested data rate, our solutions will ensure that the total illuminated times of the five beams will conform to the following ratio: $\frac{109}{542}, \frac{120}{542}, \frac{91}{542}, \frac{87}{542}, \frac{135}{542}$, hence ensuring the fairness to all 5 beams. We minimizing the total number of patterns required, hence reducing the times needed for switching and terminal synchronization. In other words, the models will maximize the effective illumination time per cycle and allocate the same \% demand satisfaction for all beams.

\subsection{A BILP formulation for fixed-length SF} 

A BIP can be represented mathematically by ones and zeros, with a ``1'' indicating a beam is illuminated and a ``0'' indicating otherwise. 
First, we consider the case when the SF duration is fixed and measured as number of time slots instead of the actual time. 

Let: 
\begin{itemize}
    \item $\Omega$ be the number of distinct demands in $\{\hRb \ | \ b \in \mN\}$; 
    \item $\mH = \big\{1,\ldots,\min \{W, \Omega\}\big\}$ be the set of weights; 
    \item $\zph \in \{0,1\}$ with $\zph=1$ if BH Pattern $p$ has a beam dwell time of $h$, and  0 otherwise;
    \item $\yphb \in \{0,1\}$  if Beam $b$ in BH Pattern $p$ and has a weight of $h$, and 0 otherwise; and 
    \item $D_b$ is the per BH cycle demand of Beam $b$ (the reason we are using a new notation here is that the demand is used as the weight in our framework, not to be confused with the actual requested or offered data rate); 
    \item $\beta(b) \subset \mN$ be the set of beams that are neighbouring (adjacent) Beam $b$; 
    and 
    \item $M = \max\{ D_b \ | \ b \in \mN\}$. 
\end{itemize} 
Recall that $\Nmax$ is the maximum number of beams that can be simultaneously illuminated. Notice that the set $\mP = \{1,\ldots,|\mP|\}$ is enumerated as index set of the patterns, for $|\mP|$ an upper bound on the number of BIPs required,  can be bounded above by the rapid heuristic algorithm we present in Section \ref{Sec:DP2}. If the set $\mP$ contains more BIPs than needed, there will be patterns $p$ such that $z^{p,h} = 0$ for all $h$. 
\begin{model}
\label{Model1}
\begin{alignat}{3}
w &= \min  \sum_{p \in \mP} \sum_{h \in \mH}& & \zph \label{M1:ObjFn} \\
 \mbox{s.t. } 
 &   \yphb & \ \leq \ &  \zph, &  \forall p \in \mP, h \in \mH, b \in \mN\label{M1:Logic} \\
 %
 %
 &   \sum_{h \in \mH}  \zph & \ \leq \ & 1, & \ \forall p \in \mP,   \label{M1:oneDwellTime} \\
 %
 %
  &  \small{\sum_{p \in \mP}  \frac{\displaystyle\sum_{h \in \mH} h \yphb}{ \displaystyle\sum_{b'' \in \mN} \sum_{h \in \mH} \yphbd } } & \ = \ & D_b, & \ \forall b \in \mN \label{M1:requestedDataRate} \\ 
 %
%
 &  \sum_{b \in \mN} \sum_{h \in \mH}  \yphb 
 & \ \leq \ & N_{\max}, & \ \forall p \in \mP \label{M1:cardinalityUB}  \\
 & y^{p,h}_b +  y^{p,h}_{b'} & \ \leq \ & 1, & \ \forall p \in \mP, h \in \mH, \notag \\
 & & & & b' \in \beta(b) \label{M1:interference}
\end{alignat} 
\end{model}

The objective function (\ref{M1:ObjFn}) minimizes the total number of number of BIPs required. The logic constraint, (\ref{M1:Logic}), ensures that Beam $b$ can only be illuminated in Pattern $p$ with weight $h$ if such a pattern-weight pair exists in the solution.  
Each BHP can only be associated with one BDT in  (\ref{M1:oneDwellTime}). 
In (\ref{M1:requestedDataRate}), we require that the sum of the weights overall all BIPs where Beam $b$ is illuminated equal to the per BH cycle demand $D_b$. (Notice that if BIP $p$ has $X$ beams illuminated, the power will be evenly distributed over each beam, hence divided by $X$). 
Constraint  (\ref{M1:cardinalityUB}) calculates the cardinality of each BHP and ensures that no more than $\Nmax$ beams are illuminated simultaneously. 
(\ref{M1:interference}) ensures no adjacent beams are illuminated simultaneously. 

Post-optimization, we calculate the total weight in an optimal solution, given as: $H^* =  \sum_{p \in \mP} \sum_{h \in \mH}  h(\zph)^*$, for $(\zph)^*$ the optimal values of the $z$ variables. 
If $T_H/ H^* \geq m_d$, for $m_d$  the minimum granularity permitted in existing HTS system, we set $W=H^*$. Otherwise, we adjust the weights $h$ in each BIP $p$ by setting the new weight $h'$ to be $h' = [h (W/H^*)]$.

As Constraint (\ref{M1:requestedDataRate}) is non-linear, our strategy is to solve a simplified but linear model and modify the power required post-optimisation. 
\begin{model}
\label{Model2}
\begin{alignat}{3}
    \min \ (\ref{M1:ObjFn}) \mbox{ s.t. } (\ref{M1:Logic}),  (\ref{M1:oneDwellTime}), 
    (\ref{M1:interference}) \mbox{ and } \notag \\
  \sum_{p \in \mP} \sum_{h \in \mH} h \yphb  & \ = \ & D_b, & \ \forall b \in \mN \label{M1:requestedDataRate2}
\end{alignat}
\end{model} 
Let $z^* , y^*$ be the optimal solution to Formulation \ref{Model2}, and $L_p = \sum_{h \in \mH}  \sum_{b \in \mN} (y^{p,h}_b)^*$. 
Method 1. We supply $L_p$ times more power to the BIP $p$. Method 2. We set $H^{**}$ to be $H^{**} = \sum_{p \in \mP} \sum_{h \in \mH} (z^{p,h})^* L_p$. Again, if $T_H / H^{**} < m_d$, we set the new weight $h'$ for BIP $p$ to be $h' = [h L_p \times \frac{W}{H'}]$. 
\begin{proposition}
The optimal value of the linear programming relaxation of Formulation \ref{Model2} is 1. 
\end{proposition}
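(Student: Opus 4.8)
The plan is to bound the relaxation value from both sides and show each bound equals $1$; throughout, the relaxation replaces $\zph \in \{0,1\}$, $\yphb \in \{0,1\}$ by $0 \le \zph \le 1$, $0 \le \yphb \le 1$. Write $H = \max \mH = \min\{W,\Omega\}$ for the largest admissible weight, and let $b^{*}$ be a beam with $D_{b^{*}} = M$.

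For the upper bound, I would construct an explicit fractional point of objective value exactly $1$. The naive idea of packing every beam into a single pattern immediately collides with the adjacency constraint (\ref{M1:interference}), so instead I would distribute the same full-weight pattern over $K$ fractional copies for a sufficiently large integer $K$: set $z^{k,H} = 1/K$ for $k=1,\dots,K$, all remaining $z$-variables to $0$, and $y^{k,H}_b = D_b/(HK)$ for every beam $b$ and every copy $k$. The objective is then $\sum_{k=1}^{K} z^{k,H} = 1$. Feasibility is checked constraint by constraint: (\ref{M1:oneDwellTime}) holds as each copy uses one weight with $z^{k,H}=1/K \le 1$; the logic constraint (\ref{M1:Logic}) reduces to $D_b \le H$; the demand equality (\ref{M1:requestedDataRate2}) holds since $\sum_{k=1}^{K} H\, y^{k,H}_b = D_b$; and (\ref{M1:interference}) becomes $(D_b+D_{b'})/(HK)\le 1$, true once $K$ is large. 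This yields a value of $1$, so the relaxation optimum is at most $1$.

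For the lower bound I would read the demand equality (\ref{M1:requestedDataRate2}) for the beam $b^{*}$: $M = \sum_{p\in\mP}\sum_{h\in\mH} h\, y^{p,h}_{b^{*}}$. Applying $h \le H$ together with the logic constraint $y^{p,h}_{b^{*}} \le \zph$ termwise gives $M \le H\sum_{p\in\mP}\sum_{h\in\mH}\zph$, i.e. the objective is at least $M/H$. Combined with the construction above, the optimum is squeezed to $1$ exactly when $M = H$.

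The step I expect to be the real crux is not either inequality in isolation---both are essentially mechanical---but justifying that $M = H$, i.e. that the largest demand $\max_b D_b$ coincides with the top weight $\max \mH = \min\{W,\Omega\}$. This single relationship is what makes the feasibility condition $D_b \le H$ hold for the construction and simultaneously lifts the $M/H$ lower bound up to $1$. I would pin it down from the definitions of the weight set $\mH$ and of the per-cycle demands $D_b$, and I would flag explicitly that the proposition presumes demands lying within the admissible weight range; if instead $M > H$, the same computation shows the relaxation value is $M/H > 1$, so the hypothesis cannot simply be dropped.
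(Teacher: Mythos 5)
Your proof is correct and follows essentially the same route as the paper's: a lower bound obtained from the demand equality for the maximum-demand beam combined with the logic constraint, and an upper bound from an explicit fractional solution that concentrates all mass on a single weight spread over several fractional pattern copies. The only substantive difference is that you place the construction at the top weight $H=\min\{W,\Omega\}$ whereas the paper places it at weight $M$; your explicit flag that the argument hinges on $M=\max\mH$ is well taken, since the paper's proof silently assumes exactly this (its construction uses weight $M$, which requires $M\in\mH$, and its lower-bound step $\sum_{p,h}(h/M)\yphbs=1\Rightarrow\sum_{p,h}\yphbs\ge 1$ requires $h\le M$ for every $h\in\mH$).
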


\begin{proof}

We prove the proposition by first showing that the optimal objective value of the (LPR) Formulation \ref{Model2} is at least 1, then showing that there exists a solution with objective 1, hence deducing the optimal objective value is exactly 1.  
Let $b^* = \mbox{arg} \max \{D_b \ | \ b \in \mN\}$. 
As (\ref{M1:requestedDataRate2}) must be true for all $b \in \mN$, it is true for $b = b^*$. 
This gives us $\sum_p \sum_h h\yphbs = M$, which implies that $ \sum_p \sum_h (h/M) \yphbs = 1 $, and therefore $\sum_p \sum_h \yphbs  \geq 1$. As (\ref{M1:Logic}) must be true for all $b \in \mN$, it is true for $b = b^*$, and thus $w^* \geq 1$.  
Now we 
show that there exists a feasible solution $\hzph, \hyphb$ such that $\sum_p \sum_h \hzph =1$. 

Consider the solution defined by: 
$\hzph, \hyphb$ such that: 
\begin{enumerate}
    \item $\hzph = 0$ for all $p$ and $h =1,\ldots, M-1$;
    \item $\hyphb = 0$ for all $p$, $b$ and all $h =1,\ldots, M-1$; 
    \item $\hypMb$ chosen such that $\sum_p \hypMb = D_b / M$ for all $b$,  
    and $\hypMb \leq 0.5$ for all $p$, $b$ (hence satisfying (\ref{M1:requestedDataRate2}), 
    and (\ref{M1:interference})); 
    \item $\hzpM = \hat{y}^{p,M}_{b^*}$ for all $p$ (hence satisfying (\ref{M1:Logic})); 
\end{enumerate}
We have that $\sum_p \hzpM = \sum_p \hat{y}^{p,M}_{b^*} = M / M = 1$ and (\ref{M1:oneDwellTime}) is satisfied as $\hzpM$ is a linear relaxation of $\zph \in \{0,1\}$. Hence the proposition is proved.  \end{proof}

Once the weights and BDTs are determined, the actual supplied data $\bRb$ can be calculated by applying suitable power to the each beam in the BIPs, taking into consideration the CNR.

\subsection{A MILP formulation for flexible-length SF} 

We now present a reformulation, a MILP that solves the same combinatorial optimization problem but can also handle systems that allow flexible-length SFs as it allocates the BDTs for each BIP directly. 
We first define some new notation. 
Let: 
\begin{itemize}
    \item $z^p \in \{0,1\}$ with $z^p=1$ if Pattern $p$ is used in the BH cycle and 0 otherwise; 
    \item $y^p_b \in \{0,1\}$ with $y^p_b=1$ if Beam $b$ is illuminated in Pattern $p$ and 0 otherwise; 
    \item  $T^p \in \mathbb{R}_+$ the BDT of Pattern $p$;  
    \item $x^p_b \in \mathbb{R}_+$ if Beam $b$ is illuminated in Pattern $p$, (the illumination duration should be the same as Pattern $p$'s BDT if the pattern is used in the BH cycle).
\end{itemize} 

\begin{model}
\label{Model3} 
\begin{alignat}{3}
 & w^*  & \ = \ &  \min \sum_{p \in \mP}  z^p \label{M2:ObjFn} \\
 \mbox{s.t. } 
 &   y^p_b & \ \leq \ &  z^p, &  \forall p \in \mP,b \in \mN\label{M2:Logic1} \\
 &   x^p_b & \ \leq \ &   D_b y^p_b, &  \forall b \in \mN\label{M2:Logicw} \\
  &  \sum_{p \in \mP} x^p_b  & \ = \ & D_b, & \ \forall b \in \mN \label{M2:requestedDataRate} \\
 &   x^p_b & \ \leq \ & T^p + M (1-y^p_b), & \ \forall p \in \mP, b \in \mN  \label{M2:sameDwellTime1} \\
 &  T^p & \ -\ & M (1-y^p_b) \leq  x^p_b, & \ \forall p \in \mP, b \in \mN  \label{M2:sameDwellTime2} \\
  & \sum_{b \in \mN} y^p_b  & \ \leq \ & \Nmax,  & \  \forall b \in \mN  \label{M2:cardinalityUB} \\
  & y^p_b +  y^p_{b'} & \ \leq \ & 1, & \ \forall p \in \mP,  b' \in \beta(b) \label{M2:interference} 
 %
\end{alignat} 
\end{model}
The objective function (\ref{M2:ObjFn}) counts the number of BIPs used. The logic constraint (\ref{M2:Logic1}) enforces that a beam is only illuminated in a pattern if the pattern is used, and  (\ref{M2:Logicw}) is another logic constraint to ensure that a beam will only be given a weight if it is illuminated in Pattern $p$. 
The sum of all weights for each beam is equal to the per BH cycle demand and is guaranteed by the use of Constraint  (\ref{M2:requestedDataRate}).
Constraints (\ref{M2:sameDwellTime1}) and (\ref{M2:sameDwellTime2}) ensures that the weight must be the same for all beams in the same pattern.  %
Cardinality restriction and neighbouring beam interference constraints are given in  (\ref{M2:cardinalityUB}) and (\ref{M2:interference}) respectively. 
Again, (\ref{M2:cardinalityUB}) is not always needed in real systems. 

The benefits of Formulation \ref{Model3} include: 1)  even though the $y^p_b$ and $T^p$ variables are continuous, the optimal solution is naturally integral if $D_b$ are integers for all $b\in \mN$; and 2) the $\zph$ and $\yphb$ variables in Formulations 1 and 2 requires that the set of possible weights enumerated in advance, hence intractable for large $D_b$. Formulation \ref{Model3} solves much faster than Formulation \ref{Model2} however the LPR LB is also poor.  We do not consider (\ref{M2:cardinalityUB}) is the following proposition as it is not always used in real systems. 
\begin{proposition}
The optimal value of the LP relaxation of Formulation \ref{Model3}, $w^*$, is 1. 
\end{proposition}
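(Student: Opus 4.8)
The plan is to mirror the proof of the preceding proposition: first show that every feasible point of the LP relaxation of Formulation \ref{Model3} has objective at least $1$, and then exhibit a single feasible point attaining objective exactly $1$, so that the optimum $w^*$ is pinned to $1$. Throughout I fix $b^* = \arg\max\{D_b \mid b \in \mN\}$, so that $D_{b^*} = M > 0$ (I assume, as in the previous proposition, that at least one demand is strictly positive).

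For the lower bound I would work entirely through the beam $b^*$. Summing the logic constraint (\ref{M2:Logicw}) over all $p \in \mP$ gives $\sum_{p} x^p_{b^*} \le M \sum_{p} y^p_{b^*}$, and substituting the demand constraint (\ref{M2:requestedDataRate}), namely $\sum_{p} x^p_{b^*} = M$, yields $\sum_{p} y^p_{b^*} \ge 1$ after dividing by $M > 0$. The logic constraint (\ref{M2:Logic1}) then forces $z^p \ge y^p_{b^*}$ for each $p$, so that $\sum_{p} z^p \ge \sum_{p} y^p_{b^*} \ge 1$. This is the cleanest part and uses only (\ref{M2:Logic1}), (\ref{M2:Logicw}) and (\ref{M2:requestedDataRate}).

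For the upper bound I would construct a uniform fractional solution spread evenly over all $|\mP| \ge 2$ patterns. Set $\ydpb = 1/|\mP|$, $\zdp = 1/|\mP|$, $\xdpb = D_b/|\mP|$, and $\Tdp = 0$ for every $p \in \mP$ and $b \in \mN$. Then $\sum_{p} \zdp = 1$ delivers the target objective; constraints (\ref{M2:Logic1}) and (\ref{M2:Logicw}) hold with equality; the demand constraint (\ref{M2:requestedDataRate}) holds since $\sum_{p} \xdpb = |\mP| \cdot D_b/|\mP| = D_b$; and the interference constraint (\ref{M2:interference}) holds because $\ydpb + \hat{y}^p_{b'} = 2/|\mP| \le 1$ whenever $|\mP| \ge 2$.

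The main obstacle is verifying the two big-$M$ dwell-time constraints (\ref{M2:sameDwellTime1})--(\ref{M2:sameDwellTime2}) at this fractional point, since there $y^p_b$ is neither $0$ nor $1$ and the big-$M$ slack does not simply switch off. With $\Tdp = 0$, constraint (\ref{M2:sameDwellTime2}) reduces to $-M(1 - 1/|\mP|) \le D_b/|\mP|$, which is immediate as the left side is nonpositive and the right side is nonnegative; constraint (\ref{M2:sameDwellTime1}) becomes $D_b/|\mP| \le M(1 - 1/|\mP|)$, i.e. $D_b \le M(|\mP| - 1)$, and this is exactly where I would invoke $D_b \le M$ together with $|\mP| \ge 2$ to close the case (equality occurring only in the extreme $|\mP| = 2$, $D_b = M$). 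Combining the two bounds then gives $w^* = 1$, as claimed.
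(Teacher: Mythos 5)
Your proof is correct and follows essentially the same route as the paper's: the lower bound comes from substituting the demand constraint (\ref{M2:requestedDataRate}) into the logic constraint (\ref{M2:Logicw}) and then invoking (\ref{M2:Logic1}), and the upper bound comes from a fractional solution with $T^p=0$, $x^p_b = D_b y^p_b$ and $y^p_b = z^p \le 1/2$, of which your uniform choice $z^p = 1/|\mP|$ is an explicit instance. Your only deviations are cosmetic improvements: working the lower bound through $b^*$ alone avoids dividing by a possibly zero $D_b$, and you verify the big-$M$ constraints (\ref{M2:sameDwellTime1})--(\ref{M2:sameDwellTime2}) more explicitly than the paper does.
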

\begin{proof}
Substituting (\ref{M2:requestedDataRate}) into (\ref{M2:Logicw}), we have that $\sum_{p \in \mP} x^p_b = D_b \leq D_b \sum_{p \in \mP} y^p_b$, which implies that $\sum_{p \in \mP} y^p_b \geq 1$ for all $b \in \mN$ and thus $\sum_{p \in \mP} z^p \geq 1$. We now show that there exists a feasible fractional solution with an objective value of 1. Let $\mNgz = \{b \in \mN  :  D_b >0\} \subseteq \mN $. 
Consider the solution given by $\ydpb, \zdp, \Tdp, \xdpb$ such that:  \begin{enumerate}
    \item 
    $\sum_{p \in \mP} \zdp =1$, (which implies that by (\ref{M2:Logic1}), (\ref{M2:Logicw}), and (\ref{M2:requestedDataRate}), $\sum_{p \in \mP} \ydpb = 1 $ for all $b \in \mN$; 
     \item 
     $\xdpb = D_b \ydpb$, for all $p \in \mP$ and $b \in \mN$ (thus satisfying (\ref{M2:Logicw}), and (\ref{M2:requestedDataRate}));
    \item 
     $\ydpb = \hat{y}^p_{b'}  = \zdp$  for all $p \in \mP$ and all $b, b' \in \mNgz, \ b\neq b'$,  $\ydpb = 0$ otherwise, and $\zdp \leq 0.5$ for all $p \in \mP$ (thus satisfying (\ref{M2:interference})); 
     %
     \item 
     $T^p = 0$, for all $p \in \mP$, and by (\ref{M2:Logicw}),  satisfies (\ref{M2:sameDwellTime1}) and (\ref{M2:sameDwellTime2}) as $y^p_b \leq 0.5$ for all $p \in \mP$ and $b \in \mN$, and that $M \geq D_b$ for all $b \in \mN$. 
\end{enumerate} 

Hence, the proposition is proved. 
\end{proof}
Notice that all patterns in an optimal solution $p \in \mP$ with $z^p >0$ are distinct. If there exists two patterns $p_1, p_2$ with the same set of beams illuminated with BDTs $T^{p_1}$ and $T^{p_2}$ respectively, they can be combined into a single the patterns with a BDT of $T^{p_1} + T^{p_2}$.

In implementation, we added the following symmetry elimination constraints - ordering BIPs by their weights to remove symmetry by permutation. 
\begin{alignat}{3}
 T^p & \ \leq \ &  T^{p+1}, &  \quad \forall p \in \mP \setminus \{|\mP|\} \label{M2:symmetryElimination} 
\end{alignat}

\section{A rapid heuristic algorithm - Decomposition by powers of two} 
\label{Sec:DP2}

We now describe our decomposition by powers of two (DP2) heuristic. 
Let $P^j$ be the binary vector that represents the $j$th BIP and $h_j$ be the associated weight.  
 
\begin{algorithm}
\caption{Decomposition by powers of 2}\label{alg:greedy}
\begin{algorithmic}[1]
\State Input $\hat{R}$ 
\State Set $\hat{R}_{\max} = \max \{\hRb \ | \ b \in \mN\}$ 
\State Set $K = \lfloor\log_2 \hat{R}_{\max} \rfloor$
\While{$K \geq 0$}
\For{$b \in \mN$} 
\State Set $h_K = 2^K$
\State  Set $P^K_b = 1$ if $\hRb \geq 2^K$ and $P^K_b = 0$ otherwise 
\State set  $\hRb \leftarrow \hRb - h_K P^K_b$
\EndFor 
\State Set $K \leftarrow K-1$
\EndWhile
\State Output  $P^0, \ldots, P^K$, $h^0,\ldots,h^K$
\end{algorithmic}
\end{algorithm}

The DP2 method provides an upper bound to Formulations \ref{Model2} and \ref{Model3} as the solutions it produces are feasible to the BHOP but has an additional constraint that all weights must be powers of two. 

Notice that each of the Power of 2 Matrix (P2M) $H^j = h_j P^j$, for all $j = 1,\ldots,K$ may be required to be further decomposed if the cardinality and/or neighbouring beam interference constraints are enforced. To eliminate neighbouring beam interference, at most three BIPs is needed, so $H^j$ can be further decomposed into a maximum of three BIPs $H^{j,1}, H^{j,2}, H^{j,3}$, for all $j = 1,\ldots,K$. 
If there exists a limit in the maximum number of illuminated beams $\Nmax$, e.g., in $H^{j,2}$ if $|\{ h^{j,2}_b > 0 \ | \ \forall b \in \mB  \}| > \Nmax$, then at most 
\[
\left\lceil 
\dfrac{|\{ h^{j,2}_b > 0 \ | \ \forall b \in \mB  \}|}{\Nmax}
\right\rceil
\]
BIPs is required for $H^{j,2}$. 

At the end of all procedures, one can merge multiple appearances of the same pattern into one and use the sum of the individual illumination times. Merging them will increase effective illumination time by saving time spent in switching and terminal synchronization.

\section{Numerical experiments} 
\label{Sec:results}

\subsection{Performance of the integer programs} 

We tested Formulation \ref{Model2} (F2), Formulation \ref{Model3} (F3), and Formulation \ref{Model3} with symmetry elimination (\ref{M2:symmetryElimination}) (F3S) on two small instances under 3 settings using IBM CPLEX V20.1 on a Mac Pro with 3 GHz 8-Core Intel Xeon E5 processor and 32 GB memory. 
\\ 
Instance 1 (INS1): 10 beams, max $D_b=39$, min  $D_b=2$.  \\
Instance 2 (INS2): 
15 beams, max $D_b=156$, min $D_b=20$. \\
Setting 1: DP not used as an UB, no cardinality constraints, no adjacent beam interference elimination constraints. \\
Setting 2: DP used as an UB, no cardinality constraints, no adjacent beam interference elimination constraints. \\
Setting 3: DP used as an UB, with cardinality and adjacent beam interference elimination constraints. \\
Solution times are in seconds. A limit of 300 seconds is given for all instances and if the problem is not solved to optimality, the optimality gap is presented. 

\begin{table}[!h]
\label{T:mathmodels}
\begin{center}
\begin{tabular}{llll}
& F2 & F3 & F3S  \\ \hline
Setting 1 - INS1 (opt)  & 5 & 5  & 5    \\
Setting 1 - INS1 (time) & 3.83  & 0.24 & 0.42    \\ \hline
Setting 1 - INS2 (opt)  & 40.76\%  & 42.58\% & 50.24\%   \\
Setting 1 - INS2 (time) & 300 & 300  & 300   \\ \hline
Setting 2 - INS1 (opt)  & 5  & 5 & 5   \\
Setting 2 - INS1 (time) & 1.01 & 0.07 & 0.06      \\ \hline
Setting 2 - INS2 (opt)  & 26.88\%  & 6 & 6  \\
Setting 2 - INS2 (time) & 300 &  65.32 & 47.03 \\ \hline
Setting 3 - INS1 (opt)  & 8 & 8  & 8   \\
Setting 3 - INS1 (time) & 53.84  & 0.68  & 0.66    \\ \hline
Setting 3 - INS2 (opt)  & 55.39\% & 20\% & 9  \\
Setting 3 - INS2 (time) & 300  & 300  & 75.18 
\end{tabular}
\caption{Comparisons of the mathematical programming models. Computation times are in seconds, and the optimality gaps are reported as percentages for instances where a proven optimality is achieved. A computation time limit of 300 seconds is applied to all problem instances. The data instances are presented in Appendix A.}
\end{center}
\end{table}

Clearly Formulation \ref{Model3} with symmetry elimination constraints performed the best. We tested a 49-beam problem with max demand 50, without neighbouring beam interference consideration, the problem is solved under Settings 1 \& 2 within seconds. The optimal objective value is the same as the one returned by the DP2 algorithm. 
Once the cardinality and adjacent beam interference elimination constraints are added, CPLEX did not return a feasible solution after running for 10 minutes. 

\subsection{Performance of the DP2 algorithm}
To assess performance of the DP2 method in medium to large problem instances, a geographic and satellite agnostic test-bed was developed. The satellite is assumed to service demand points located within a 100x100 unit grid centred at the origin. The beam radiation pattern is generated using overlapping circular footprints of uniform radius representing the half-power bandwidth of a parabolic reflector and inter-beam interference is assumed to be negligible beyond adjacent beams (as shown in Figure \ref{Dia:solution}). User locations are aggregated to the centres of each beam servicing the user, with demands adjusted such that the ratio between the carrier to noise ratio ($C/N_0$) and the user demand ($R_b$) remains constant when considering the change in effective isotropic radiated power (EIRP) and antenna gain to noise temperature ratio ($G/T$) at the aggregated location. 

All trials were conducted with a fixed SF duration of 1.5ms and a complete cycle duration of 256 SFs. Three input factors were identified for numerical experimentation, with a two-level input coding as follows:
\begin{table}[!h]
\label{T:dataset}
\begin{center}
\begin{tabular}{cccc}
\multicolumn{1}{l}{\textbf{Trial}} & \multicolumn{1}{l}{\textbf{\# Users}} & \multicolumn{1}{l}{\textbf{Demand Variation}} & \multicolumn{1}{l}{\textbf{Distribution}} \\ \hline 
\textbf{1}                         & 800                                   & 10-15 mbps                                    & Continuous                               \\
\textbf{2}                         & 200                                   & 1-35 mbps                                     & Continuous                               \\
\textbf{3}                         & 800                                   & 10-15 mbps                                    & Discrete                                  \\
\textbf{4}                         & 200                                   & 1-35 mbps                                     & Discrete                                  \\
\textbf{5}                         & 200                                   & 10-15 mbps                                    & Discrete                                  \\
\textbf{6}                         & 800                                   & 1-35 mbps                                     & Discrete                                  \\
\textbf{7}                         & 200                                   & 10-15 mbps                                    & Continuous                               \\
\textbf{8}                         & 800                                   & 1-35 mbps                                     & Continuous                              
\end{tabular}
\caption{Characteristics of the benchmark problem set, the satellite agnostic test-bed with a simulated beam footprints and user demands. } 
\end{center}
\end{table}
Each trial was run with 16, 49 and 132 beams, user points were generated randomly, either continuously across the grid or in discrete clusters and demand values were generated randomly between upper and lower bounds.

\textbf{Capacity Error.} Relative capacity error is the sum of absolute difference of supplied and requested data rates across all beams, as a proportion of the total demand. BIPs generated by DP2 significantly reduced capacity error across all trials, with an average error reduction of 94.3\% (Figure \ref{Dia:USC_small}). For an even distribution, capacity error was significantly impacted by user distribution while the DP2 method displayed consistent results across all trials.

\begin{figure}[h!]
\begin{center}
\centerline{\includegraphics[width=0.45\textwidth]{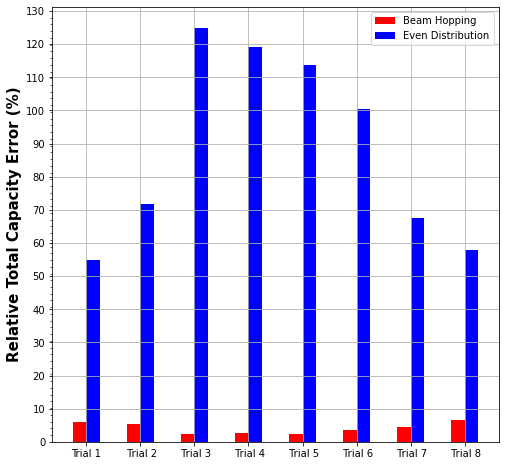}}
\caption{Capacity error for BH and even distribution systems}
\label{Dia:USC_small}
\end{center}
\vskip -0.2in
\end{figure}

\textbf{Process Time and Pattern Numbers.} The ratio of total BIPs to beam numbers ($B_{ratio}$) significantly decreases with increased beams (Figure \ref{Dia:P_time}). This is due to the impact of separating patterns with adjacent beams being more profound for larger beamwidths. For medium to large problem instances, the trivial upper bound of one pattern per beam ($B_{ratio}=1$) is significantly improved using DP2, with 49 and 132 beam systems having an average $B_{ratio}$ of 0.28 and 0.09 respectiveleven.  

\begin{figure}[h!]
\begin{center}
\centerline{\includegraphics[width=0.45\textwidth]{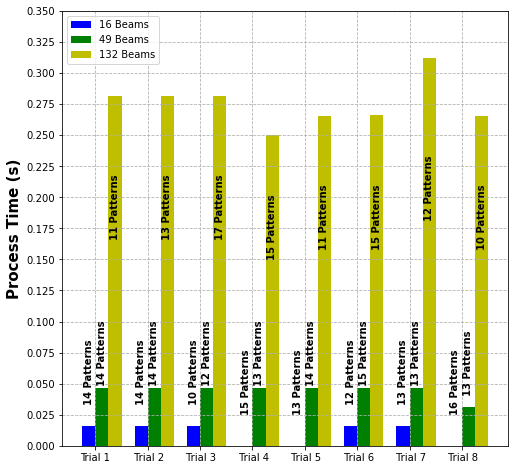}}
\caption{Process time and required number of decomposed beam illumination patterns}
\label{Dia:P_time}
\end{center}
\vskip -0.2in
\end{figure} 

\textbf{Extended Process Time.} Process time to generate BIPs using DP2 remained tractable up to the tested limit of 1085 beams, with a solution obtained in 19.01s (Figure \ref{Dia:time}). There is an increase in capacity error with the number of beams. As the DP2 method decomposes demands to the nearest integer, accumulated rounding error will increase with beam numbers due to individual beams having a smaller portion of overall demand (increased sensitivity to rounding) and more rounding instances (potentially one instance per beam).

\begin{figure}[h!]
\begin{center}
\centerline{\includegraphics[width=0.45\textwidth]{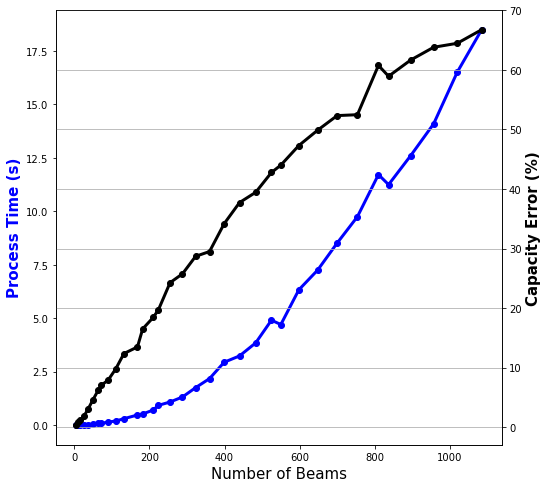}}
\caption{Process time and capacity error for increasing beam numbers}
\label{Dia:time}
\end{center}
\vskip -0.2in
\end{figure} 

In terms of the optimality gap of the DP2 algorithm, we refer to the two problem instances generated for the integer programming models. For Instance 1, as the largest power of 2 that is no greater than max $D_b=39$ is $2^5$, together with $2^0$, we have six patterns, 1 more than the optimal solution for Settings 1 and 2 for Instance 1 returned by the exact mathematical formulations. For Instance 2, the largest power of 2 no greater than max $D_b=156$ is $2^7$, so there will be eight patterns for Settings 1 and 2, same as the optimal solution returned by the exact mathematical formulations.

\section{Conclusions and future research direction} 

In this paper we presented a rapid complexity log $n$ heuristic algorithm, DP2, for solving BHTP problems. We also presented two integer programming models for exact solutions and proved the lower bounds of the linear programming relaxation. The BILP and MILP models, as they are presented cannot handle large-scale problem instances. A future research direction is to develop custom-made algorithms for solving larger problem instances within reasonable computation time. DP2, on the other hand, solved instances with over 1000 beams in 19.109 seconds. The benefits of our DP2 algorithm are that it is computationally efficient and simple to implement with the ability to adapt to varying input scenarios while maintaining fairness (beam demand matching). Noting the impact of cumulative rounding errors, the DP2 method is well suited for medium sized instances (50-200 beams) or for large instances ($>$200 beams) when used as an upper bound for complex optimization algorithms. As a future research direction, we plan to move from a generic test-bed to system specific testing, assessing potential integration issues.

\bibliographystyle{named}
\bibliography{ijcai22}

\begin{thebibliography}{}

\bibitem[\protect\citeauthoryear{Alberti \bgroup \em et al.\egroup }{2010}]{alberti2010system}
X~Alberti, J~M Cebrian, A~{Del Bianco}, Z~Katona, J~Lei, M~A Vazquez-Castro, A~Zanus, L~Gilbert, and N~Alagha.
\newblock {System capacity optimization in time and frequency for multibeam multi-media satellite systems}.
\newblock In {\em 2010 5th Advanced Satellite Multimedia Systems Conference and the 11th Signal Processing for Space Communications Workshop}, pages 226--233. IEEE, 2010.

\bibitem[\protect\citeauthoryear{Alegre-Godoy \bgroup \em et al.\egroup }{2012}]{alegre2012offered}
Ricard Alegre-Godoy, Nader Alagha, and Maria~Angeles V{\'{a}}zquez-Castro.
\newblock {Offered capacity optimization mechanisms for multi-beam satellite systems}.
\newblock In {\em 2012 IEEE International Conference on Communications (ICC)}, pages 3180--3184. IEEE, 2012.

\bibitem[\protect\citeauthoryear{Angeletti \bgroup \em et al.\egroup }{2006}]{angeletti2006beam}
Piero Angeletti, David {Fernandez Prim}, and Rita Rinaldo.
\newblock {Beam hopping in multi-beam broadband satellite systems: System performance and payload architecture analysis}.
\newblock In {\em 24th AIAA International Communications Satellite Systems Conference}, page 5376, 2006.

\bibitem[\protect\citeauthoryear{Anzalchi \bgroup \em et al.\egroup }{2010}]{anzalchi2010beam}
J~Anzalchi, A~Couchman, P~Gabellini, G~Gallinaro, L~D'agristina, N~Alagha, and P~Angeletti.
\newblock {Beam hopping in multi-beam broadband satellite systems: System simulation and performance comparison with non-hopped systems}.
\newblock In {\em 2010 5th Advanced Satellite Multimedia Systems Conference and the 11th Signal Processing for Space Communications Workshop}, pages 248--255. IEEE, 2010.

\bibitem[\protect\citeauthoryear{Hu \bgroup \em et al.\egroup }{2019}]{Hu2019}
Xin Hu, Shuaijun Liu, Yipeng Wang, Lexi Xu, Yuchen Zhang, Cheng Wang, and Weidong Wang.
\newblock {Deep reinforcement learning-based beam Hopping algorithm in multibeam satellite systems}.
\newblock {\em IET Communications}, 13(16), 2019.

\bibitem[\protect\citeauthoryear{Hu \bgroup \em et al.\egroup }{2020}]{hu2020dynamic}
Xin Hu, Yuchen Zhang, Xianglai Liao, Zhijun Liu, Weidong Wang, and Fadhel~M Ghannouchi.
\newblock {Dynamic beam hopping method based on multi-objective deep reinforcement learning for next generation satellite broadband systems}.
\newblock {\em IEEE Transactions on Broadcasting}, 66(3):630--646, 2020.

\bibitem[\protect\citeauthoryear{Kyrgiazos \bgroup \em et al.\egroup }{2013}]{Kyrgiazos2013}
A.~Kyrgiazos, B.~Evans, and P.~Thompson.
\newblock {Irregular beam sizes and non-uniform bandwidth allocation in HTS}.
\newblock In {\em 31st AIAA International Communications Satellite Systems Conference}, 2013.

\bibitem[\protect\citeauthoryear{Lei and Vazquez-Castro}{2011}]{lei2011multibeam}
Jiang Lei and Maria~Angeles Vazquez-Castro.
\newblock {Multibeam satellite frequency/time duality study and capacity optimization}.
\newblock {\em Journal of Communications and Networks}, 13(5):472--480, 2011.

\bibitem[\protect\citeauthoryear{Lei \bgroup \em et al.\egroup }{2020a}]{lei2020beam}
Lei Lei, Eva Lagunas, Yaxiong Yuan, Mirza~Golam Kibria, Symeon Chatzinotas, and Bj{\"{o}}rn Ottersten.
\newblock {Beam illumination pattern design in satellite networks: Learning and optimization for efficient beam hopping}.
\newblock {\em IEEE Access}, 8:136655--136667, 2020.

\bibitem[\protect\citeauthoryear{Lei \bgroup \em et al.\egroup }{2020b}]{Lei2020}
Lei Lei, Eva Lagunas, Yaxiong Yuan, Mirza~Golam Kibria, Symeon Chatzinotas, and Bjorn Ottersten.
\newblock {Deep Learning for Beam Hopping in Multibeam Satellite Systems}.
\newblock In {\em IEEE Vehicular Technology Conference}, volume 2020-May, 2020.

\bibitem[\protect\citeauthoryear{Shi \bgroup \em et al.\egroup }{2020}]{Shi2020}
Dingyuan Shi, Feng Liu, and Tao Zhang.
\newblock {Resource Allocation in Beam Hopping Communication Satellite System}.
\newblock In {\em 2020 International Wireless Communications and Mobile Computing, IWCMC 2020}, 2020.

\bibitem[\protect\citeauthoryear{Wang \bgroup \em et al.\egroup }{2019a}]{Wang2019}
Lin Wang, Chen Zhang, Dexin Qu, and Gengxin Zhang.
\newblock {Resource allocation for beam-hopping user downlinks in multi-beam satellite system}.
\newblock In {\em 2019 15th International Wireless Communications and Mobile Computing Conference, IWCMC 2019}, 2019.

\bibitem[\protect\citeauthoryear{Wang \bgroup \em et al.\egroup }{2019b}]{Wang2019a}
Yaxin Wang, Dongming Bian, Jing Hu, Jingyu Tang, and Chuang Wang.
\newblock {A Flexible Resource Allocation Algorithm in Full Bandwidth Beam Hopping Satellite Systems}.
\newblock In {\em Proceedings of 2019 IEEE 3rd Advanced Information Management, Communicates, Electronic and Automation Control Conference, IMCEC 2019}, 2019.

\bibitem[\protect\citeauthoryear{Wang \bgroup \em et al.\egroup }{2021}]{Wang2021}
Anyue Wang, Lei Lei, Eva Lagunas, Symeon Chatzinotas, Ana Isabel~Pérez Neira, and Bj{\"{o}}rn Ottersten.
\newblock Joint beam-hopping scheduling and power allocation in noma-assisted satellite systems.
\newblock In {\em 2021 IEEE Wireless Communications and Networking Conference (WCNC)}, pages 1--6, 2021.

\bibitem[\protect\citeauthoryear{Zhang \bgroup \em et al.\egroup }{2019}]{Zhang2019}
Yuchen Zhang, Xin Hu, Rong Chen, Zhili Zhang, Liquan Wang, and Weidong Wang.
\newblock {Dynamic beam hopping for DVB-S2X satellite: A multi-objective deep reinforcement learning approach}.
\newblock In {\em Proceedings - 2019 IEEE International Conferences on Ubiquitous Computing and Communications and Data Science and Computational Intelligence and Smart Computing, Networking and Services, IUCC/DSCI/SmartCNS 2019}, 2019.

\end{thebibliography}

\appendix

\section{Test Instance 1 for the mathematical programming model} 

 nbBeams $= 10$; 

  \noindent 
 Demands $= [
 20, 56, 30, 28, 15, 
 2, 39, 10, 38, 29
 ];$ 

 \noindent 
 beamNeighbours $= [
 \{2,4,5\}, 
 \{3,5,6\}, 
 \{6,7\}, 
 \{5,8\}$, 
 $
 \{6,8,9\},
 \{7,9,10\},
 \{10\},
 \{9\}, 
 \{10\} 
 ];$
 
\section{Test Instance 2 for the mathematical programming model}

  nbBeams $= 15$; 
  
  \noindent 
 Demands $= [
 20, 94, 89, 109, 63, 
 156, 70, 93, 86, 50
, 132$, 
$101, 28, 31, 39 
 ]; $
 
  \noindent 
 beamNeighbours $= [
\{2,6,7\}, 
\{3,7,8\}, 
\{4,8,9\}, 
\{5,9,10\}$,  
$\{10\}, 
\{7,11\}, 
\{8,11,12\}, 
\{9,12,13\}, 
\{10,13,14\}, 
\{14,15\}$, 
$\{12\}, 
\{13\}, 
\{14\},
\{15\}
 ];$
\end{document}